\newtheorem{theorem}{Theorem}
\newtheorem{example}[theorem]{Example}
\newtheorem{lemma}[theorem]{Lemma}
\newtheorem{remark}[theorem]{Remark}
\let\newexample=\example
\renewenvironment{example}{\newexample\sl}{\endtheorem}
\newenvironment{block}{\left[ \begin{array}}{\end{array}\right] }
\def\ip#1#2{\left\langle #1, #2 \right\rangle }
\font\BBbannan=msbm10 at 10pt
\def\triangleeq{\stackrel{\triangle }{=}}
\def\Reella{\mbox{\BBbannan R} } 
\def\Complexa{\mbox{\BBbannan C} }
\def\trace{\mbox{tr} }
\def\cA{{\cal A } }
\def\cB{{\cal B } }
\def\cC{{\cal C } }
\def\cD{{\cal D } }
\def\cG{{\cal G } }
\def\cH{{\cal H } }
\def\cK{{\cal K } }
\def\cL{{\cal L } }
\def\cM{{\cal M } }
\def\cS{{\cal S } }
\def\cW{{\cal W } }
\def\ninputs{1}
\def\noutputs{1}
\def\nstates{\eta}
\title{\LARGE \bf
Generalizing the Markov and covariance 
interpolation problem using input-to-state filters
}
\author{Per Enqvist
\thanks{This work was supported by the National Research Council ``Vetenskapsrådet'' in Sweden}
\thanks{P. Enqvist is with the department of Mathematics,
        Royal Institute of Technology, Lindstedtsv. 25, SE-100 44 Stockholm, Sweden.
        {\tt\small penqvist@math.kth.se}}%
}
\begin{document}

\maketitle
\thispagestyle{empty}
\pagestyle{empty}

\begin{abstract}
In the Markov and covariance interpolation problem
a transfer function $W$ is sought that match 
the first coefficients in the expansion   
of $W$ around zero and the first coefficients
of the Laurent expansion of the corresponding 
spectral density $WW^\star$.
Here we solve an interpolation
problem where the matched parameters are the 
coefficients of expansions of $W$ and $WW^\star$ 
around various points in the disc.
The solution is derived using input-to-state filters and 
is determined by simple calculations
such as solving Lyapunov equations and generalized
eigenvalue problems.
\end{abstract}

\begin{keywords}
Markov COVERs, First and second order moment matching, 
Realization theory, Impulse parameters, Covariance interpolation,
Inverse problems, Input-to-state filters.
\end{keywords}

\section{INTRODUCTION}

The problem of designing filters from covariances and 
Markov parameters has been studied before in numerous 
papers \cite{MulR,Inouye,Skelton88,Skelton93,Liu_Skelton,Skelton_And88,Skelton96,MK}.
Skelton {\emph et. al.} call a stable model matching Markov parameters
$H_0,H_1,\dots,H_{q-1}$ and covariances $R_0,R_1,\dots,R_{q-1}$
a q-Markov COVariance Equivalent Realization (q-Markov COVER)
 and they have shown that if the data satisfies a particular 
consistency condition (which can be avoided using a 
variable input variance as in \cite{cdc2002}), there are many 
such q-Markov COVERs and they are parameterized by 
a set of unitary matrices.
One of the parameters considered ``as known'' in the classical 
q-Markov COVER theory is the variance of the input noise.
In \cite{cdc2002,PerIEEE-AC} the author proposed a method 
for designing minimal degree realizations using 
the variance of the input noise as a design parameter
which enabled a realization of lower degree to be determined.
In fact, that method guarantees a unique stable solution for 
generic data.
Here, using  input-to-state filters, we solve an interpolation
problem where the matched parameters are the 
coefficients of expansions of $W$ and $WW^\star$ 
around various points in the disc.
We could for example consider matching the constraints
\[ W(p_1) =q_1, \cdots , W(p_n)=q_n\]
for some points $p_1,\cdots,p_n$ in the unit disc
and similarly for $WW^\star$.
Most results in \cite{cdc2002} carry over to this more 
general problem.
A formal definition of the problem considered is given 
in the next section.
Another approach to this problem was taken in \cite{mboup}.
The main objective of that paper was to prove existence 
of a fixed point for the Steiglitz-McBride algorithm and 
a different kind of normalization was used.


\section{Problem formulation}\label{sec:pro}

First the Markov and Covariance interpolation problem 
as formulated in \cite{cdc2002} is described and then 
input-to-state filters are introduced for treating the 
generalized problem.

\subsection{The Markov and Covariance interpolation problem}
\label{sec:MC}


We consider a SISO system where a deterministic control signal $v$ 
and a stochastic noise signal $w$ are fed through the 
same system $\cW$ to produce the output $y$ as depicted in 
Fig.~\ref{f1}.

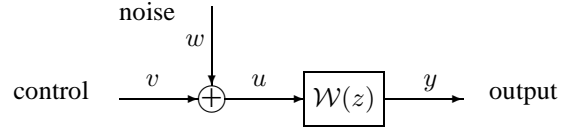
\begin{figure}
\begin{picture}(200,50)(-10,-10)
\put(40,30){\mbox{noise}}
\put(65,20){$w$}
\put(0,0){\mbox{control}}
\put(50,5){$v$}
\put(40,0){\vector(1,0){30}}
\put(75,0){\circle{10}}
\put(71,-3){\Large +}
\put(80,0){\vector(1,0){30}}
\put(90,5){$u$}
\put(75,35){\vector(0,-1){30}}
\put(110,-10){\framebox(30,20){$\cW(z)$}}
\put(140,0){\vector(1,0){30}}
\put(180,0){\mbox{output}}
\put(155,5){$y$}
\end{picture}
\caption{System considered}  \label{f1}
\end{figure}

Define $u=v+w$ and let $v$ be the control input 
and $w$ an additive noise term. 
Assuming that the transfer function $\cW$ is rational and 
of McMillan degree $\nstates$, 
it can be described by a minimal state space system
\begin{equation}\label{eq:ss}
\begin{array}{rcl}
\chi_{j+1} &  = & \cA \chi_j +  \cB u_j, \\ 
y_j   & = & \cC \chi_j +  \cD u_j,
\end{array}
\end{equation}
 where $\cA\in\Complexa^{\nstates \times \nstates}$, 
$\cB\in\Complexa^{\nstates \times \ninputs}$, 
$\cC\in\Complexa^{\noutputs \times \nstates}$ and 
$\cD\in \Complexa$.
The output $y$ is the superposition of the outputs
due to each of the inputs $v$ and $w$.
Therefore, data from the system 
can be obtained by the following idealized 
experiments - or in any other practically more 
suitable way.

First, determine the 
output when the noise $w$ is zero and $v$ is a unit impulse, 
yielding the Markov parameters (impulse response parameters)
\begin{equation}\label{mar} 
H_0, H_1, \dots, H_\ell. 
\end{equation} 

Second, determine the output when the control $v$ is 
zero and $w$ is mean zero white noise with unknown
variance $\Lambda \in \Complexa$.
Assuming $\cW$ is asymptotically stable then
this system provides a realization of a stationary 
stochastic process, and by truncated ergodic sums 
the covariances 
\begin{equation}\label{cov}
 R_0, R_1, \dots, R_\ell 
\end{equation}
can be estimated such that the condition
\begin{equation}\label{T}
\left[ \begin{array}{cccc}
R_0 & R_1 & \ddots & R_\ell \\
\bar R_1 & R_0 & \ddots & \ddots \\
\ddots & \ddots & \ddots & R_1\\
\bar R_\ell & \ddots & \bar R_1 & R_0
\end{array} \right] \succeq 0
\end{equation}
is satisfied.


\subsection{Input-to-State filters and interpolation}

In order to analyze a signal it is useful 
to consider a new signal obtained by 
applying an input-to-state filter \cite{TGselHA},
i.e. if $y_k$ is our original signal 
we define the new state vector $x_k$ by
\begin{equation}\label{i2s}
 x_k = A x_{k-1} + By_k,  \quad x_0=0,
\end{equation}
where $A\in\Complexa^{n\times n}$, $B\in\Complexa^{n\times 1}$
and the eigenvalues of $A$ lies in the open unit disc.
Note that the state $\chi$ in (\ref{eq:ss})
is not the same as the new ``artificial'' state $x$
defined in (\ref{i2s}) from the ``inputs'' $y_k$.

Consider the input to state map $G$:

 \begin{equation}
 G(z) := (I-zA)^{-1} B
 \end{equation}
 where we will assume that $(A,B)$ is a reachable pair,
i.e. 
\begin{equation}\label{Gamma}
 \Gamma = \begin{block}{cccc} 
B& AB & \dots & A^{n-1}B
\end{block}
\end{equation}
is full rank. 

A wide class of interpolation
problems can now be approached in a unified framework
by expressing the interpolation constraints as inner
products with the input-to-state map $G$.
Let $\ip{\cdot}{\cdot}$ denote the standard $L_2$ inner
 product on the circle, 
and for vector- and matrix-valued functions $F_1$ and $F_2$
define
\[ \ip{F_1}{F_2} = \frac{1}{2\pi} \int_{-\pi}^\pi 
F_1(e^{i\theta}) F_2^\star(e^{i\theta}) \, d\theta, \]
where the integral is evaluated elementwise and 
$F^\star$ denotes the adjoint of $G$, i.e.
\[ F^\star(z) = F(\bar z^{-1})^*, \]
where the superscript $*$ denotes the usual complex conjugate.
Notice that we will allow the $L_2$ inner product 
between two matrix-valued functions $F_1$ and $F_2$, 
possibly of different sizes,
provided that the product $F_1(z)F_2^\star(z)$ is well defined. 

In the special case where $A=\mbox{diag} (p_1,p_2,\dots,p_n)$, $|p_j|<1$, 
and $B=[1,1,\cdots,1]^T$, 
the scalar function on the $k$:th row of $G(z)$ is
\[  g_k(z) =\frac{1}{1-p_i z}.\]
Then from the Cauchy's integral formula  
$\ip{f}{g_k}=f(p_i^*)$, i.e. the values of $f$ 
at the selected points can be expressed in terms of 
the inner product.

In the other special case where all interpolation points are at 
the origin, i.e. the values of the function and its derivatives
at zero are interpolated as in the Caratheodory
interpolation problem, then we could chose 
\begin{equation}\label{AB_CM}
 A= \begin{block}{cccc}
  0 & 0 & \dots & 0\\
  1 & 0 & \dots & 0\\
  0 & \ddots & \ddots & \vdots\\
  0 & 0 & 1 & 0  
  \end{block}, \quad 
 B= \begin{block}{cccc} 1 \\ 0 \\ \vdots \\ 0 \end{block}, 
\end{equation}
so that  
\begin{equation}
 G(z) = \begin{block}{cccc} 1 \\ z \\ \vdots \\ z^{n-1} 
\end{block}. 
\end{equation}
The states are then the $n$ most recent outputs and it is easy 
to see that the covariance of the state is a Toeplitz matrix
as in (\ref{T}).

In practice one could be interested in having 
a mixture of interpolation conditions on the function
values at different points and on some of its derivatives, 
and this can be accomplished by considering for example
$A$-matrices with some particular Jordan structure.
To be able to find a $B$ such that $(A,B)$ is reachable
it is necessary that $A$ is cyclic, so there can not 
be more than one Jordan block for each interpolation
point (eigenvalue of $A$).

Now given some $G$, if $d\mu$ is a matricial spectral measure 
of the input (i.e. $y$ the input to $G$)
 the state covariance $\Sigma$ will satisfy \cite{TGselHA}

 \begin{equation}\label{Sigma}
  \Sigma = \int_{-\pi}^\pi G(e^{j\theta}) d\mu(\theta) G(e^{j\theta})^\star.
 \end{equation} 

For the more general input-to-state filter it is more difficult 
to know what is the structure of the state-covariance matrix.
In Theorem~\ref{th:Sigma} below,  a  
result from \cite{TGselHA} describing the 
feasible structures is stated, but first we need to remind
the reader of a well-known result.

\begin{lemma}\label{lem:G}
The matrix $\cG$ defined by 
\begin{equation}\label{GG}
 \cG \triangleeq \ip{G}{G}, 
\end{equation}
is the Reachability Gramian solving the 
discrete time Lyapunov equation
\begin{equation} \label{AGA}
 \cG = A \cG A^* + BB^*. 
\end{equation}
Since $(A,B)$ is assumed to be a reachable pair, 
$\cG$ is invertible.
\end{lemma}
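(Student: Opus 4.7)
The plan is to prove both claims by writing $G$ as an absolutely convergent power series on the unit circle and then identifying $\cG$ as an infinite sum of outer products.

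First I would expand $G(z) = (I-zA)^{-1}B = \sum_{k=0}^\infty A^k B\, z^k$. This expansion is valid on a neighborhood of the closed unit disc because the eigenvalues of $A$ are assumed to lie in the open unit disc, so the series converges uniformly on $|z|=1$. Hence term-by-term integration is legal.

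Next I would compute $\ip{G}{G}$ by plugging in this series. Since $G^\star(e^{i\theta}) = G(e^{i\theta})^* = \sum_{j=0}^\infty B^*(A^*)^j e^{-ij\theta}$, and since $\frac{1}{2\pi}\int_{-\pi}^\pi e^{i(k-j)\theta}d\theta = \delta_{kj}$, the cross terms vanish and
\begin{equation*}
 \cG = \ip{G}{G} = \sum_{k=0}^\infty A^k BB^* (A^*)^k.
\end{equation*}
Substituting this expression into the right-hand side of (\ref{AGA}) and shifting the summation index verifies the Lyapunov equation directly:
\begin{equation*}
 A\cG A^* + BB^* = \sum_{k=0}^\infty A^{k+1}BB^*(A^*)^{k+1} + BB^* = \cG.
\end{equation*}

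For invertibility, I would truncate the series at $k=n-1$ and recognize the partial sum as $\Gamma\Gamma^*$, where $\Gamma$ is the reachability matrix defined in (\ref{Gamma}). Since each term $A^k BB^*(A^*)^k$ is positive semidefinite, we have $\cG \succeq \Gamma\Gamma^*$. The reachability hypothesis says $\Gamma$ has full row rank, so $\Gamma\Gamma^* \succ 0$ and therefore $\cG \succ 0$, which in particular gives invertibility.

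There is no real obstacle here; the only point that deserves a brief remark is the justification of the interchange of sum and integral, which is immediate from the spectral-radius assumption on $A$ together with the resulting geometric bound on $\|A^k\|$. The rest is routine bookkeeping, and the identification of $\Gamma\Gamma^*$ as a truncation of $\cG$ makes the link between the two parts of the lemma transparent.
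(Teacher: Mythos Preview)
Your proof is correct, but it takes a different route from the paper's.  The paper does not expand $G$ as a power series; instead it uses the functional identity $AG(z)=z^{-1}\bigl(G(z)-B\bigr)$, writes $A\cG A^*=\ip{AG}{AG}$, and reduces the result to point evaluations $\ip{G}{1}=G(0)=B$ and $\ip{G^\star}{1}=B^*$.  Your approach is the more elementary ``series and orthogonality'' computation, giving $\cG=\sum_{k\ge 0}A^kBB^*(A^*)^k$ directly and then checking~\eqref{AGA} by an index shift.

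Both arguments are standard.  The paper's identity $AG(z)=z^{-1}(G(z)-B)$ is the one that is reused verbatim in the proof of the next lemma for $\cH$, so it fits the paper's internal economy.  Your version, on the other hand, makes the invertibility claim explicit and transparent via $\cG\succeq\Gamma\Gamma^*\succ 0$, whereas the paper simply invokes reachability without spelling this out.
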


\begin{proof}
Note first that 
\begin{equation}\label{AG}
 AG(z) = z^{-1}( G(z) -  B),
\end{equation}
and then multiply (\ref{GG})
with $A$ from the left and $A^*$ from the right 
to obtain
\begin{eqnarray}
A\cG A^*  & = & \ip{AGG^\star A^*}{1} \nonumber \\
& = & 
\ip{z^{-1} (G -B) z(G^\star-B^*)}{1} 
\nonumber \\
& = & 
\ip{GG^\star +BB^*-BG^\star-GB^*}{1}
\nonumber \\
& = & 
\cG+ BB^*-BB^*-BB^*.
\label{eq:AGA}
\end{eqnarray}
The last step follows by observing that 
 $G$ is analytic in the unit disc and thus
$\ip{G}{1}=G(0) = B$, and similarly
$\ip{G^\star}{1}=G^\star(\infty) = B^*$.

Since $A$ is assumed to be asymptotically stable
the solution to the Lyapunov equation is unique, 
and this completes the proof.
\end{proof}

\begin{theorem}\label{th:Sigma}
A positive definite matrix $\Sigma$ is a state-covariance
matrix for a suitable input process if and only if it is of the 
form
\[ \Sigma = \frac{1}{2} (\cM \cG + \cG \cM^*)\]
for a matrix $\cM$ which commutes with $A$.
Furthermore, any such matrix $\cM$ is uniquely defined modulo
an additive imaginary constant $\alpha I$ with $\alpha\in j\Reella$.
\end{theorem}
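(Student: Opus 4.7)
The plan is to prove the necessity and sufficiency directions separately, using the reachability of $(A,B)$ to pass between $\cM$ and its action on $B$, and to invoke uniqueness of Lyapunov solutions to pin down $\Sigma$. The central observation is that subtracting $A\Sigma A^*$ yields a low-rank expression in which the role of $B$ and $\cM B$ become visible, and Lemma~\ref{lem:G} tells us how to relate this back to $\cG$.

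For necessity, I would start from $\Sigma = \int G\,d\mu\,G^\star$ and compute $\Sigma - A\Sigma A^*$ using the identity $AG(z) = z^{-1}(G(z)-B)$ from (\ref{AG}). On the unit circle the factors $z^{-1}\bar z^{-1}$ cancel, mirroring the calculation in the proof of Lemma~\ref{lem:G}, and yield
\[ \Sigma - A\Sigma A^* = BX^* + XB^* \]
for some $X\in\Complexa^{n\times 1}$, after absorbing the term proportional to $\int d\mu$ into $X$. Since $(A,B)$ is reachable, $A$ is cyclic with cyclic vector $B$, so every matrix that commutes with $A$ is a polynomial in $A$, and the equation $\cM B = 2X$ admits a (unique) solution of the form $\cM = p(A)$. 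A direct calculation using $\cM A = A\cM$ together with (\ref{AGA}) shows that $\tfrac{1}{2}(\cM\cG + \cG\cM^*)$ satisfies the same Lyapunov equation $Y - AYA^* = BX^* + XB^*$ as $\Sigma$ does, and asymptotic stability of $A$ then forces the two to coincide.

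For sufficiency, given a positive definite $\Sigma = \tfrac{1}{2}(\cM\cG + \cG\cM^*)$ with $\cM A = A\cM$, the same computation gives $\Sigma - A\Sigma A^* = \tfrac{1}{2}(\cM B B^* + B B^*\cM^*)$, so the difference $\Sigma - A\Sigma A^*$ has rank at most two and the specific form characterized in \cite{TGselHA}. I would then invoke that characterization to produce an input process whose state covariance is $\Sigma$. This is the main obstacle in a fully self-contained proof, as it requires constructing a positive measure on the circle realizing a generalized moment problem, typically done via a Naimark-type dilation or a Schur-type recursion rather than by elementary Lyapunov arguments.

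Finally, for the uniqueness claim, suppose $\cM_1$ and $\cM_2$ yield the same $\Sigma$. Set $N = \cM_1 - \cM_2$; it commutes with $A$ and satisfies $N\cG + \cG N^* = 0$. The necessity computation applied to $N$ reduces this to $(NB)B^* + B(NB)^* = 0$, which upon right-multiplication by $B$ forces $NB = \alpha B$ for some $\alpha\in j\Reella$. Since $B$ is a cyclic vector for $A$ and $N$ is a polynomial in $A$, the identity $(N-\alpha I)B = 0$ propagates via $(N-\alpha I)A^k B = A^k(N-\alpha I)B = 0$ to every basis vector $A^k B$, yielding $N = \alpha I$ as claimed.
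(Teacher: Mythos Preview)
The paper does not supply a proof of Theorem~\ref{th:Sigma}; it is quoted as a result of Georgiou \cite{TGselHA}, and the alternative characterization $\Sigma-A\Sigma A^*=BL+L^*B^*$ is likewise attributed to \cite{TGstatecov} without argument. So there is no in-paper proof to compare your attempt against.

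That said, your reconstruction is sound and matches the structure of the arguments in the cited references. The necessity direction and the uniqueness claim are fully justified by your Lyapunov calculations: from $\Sigma=\int G\,d\mu\,G^\star$ one indeed gets $\Sigma-A\Sigma A^*=BX^*+XB^*$ with $X=\int G\,d\mu-\tfrac{1}{2}(\int d\mu)B$, and reachability lets you solve $\cM B=2X$ uniquely among polynomials in $A$; the uniqueness argument via $(NB)B^*+B(NB)^*=0\Rightarrow NB=\alpha B$ with $\alpha+\bar\alpha=0$ is correct. You are also right to flag sufficiency as the substantive step: producing a nonnegative measure $d\mu$ on the circle with prescribed generalized moments is exactly the content of \cite{TGselHA,TGstatecov}, and cannot be obtained from Lyapunov uniqueness alone. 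Since the paper itself defers this to the same references, your treatment is at least as complete as the paper's.
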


Another way to describe the structure of the state covariance $\Sigma$ 
is that it satisfies the equation \cite{TGstatecov}
\[ \Sigma - A\Sigma A^* = BL+L^*B^* \]
for some $L$.


Let $\cH_2$ denote the Hardy space of functions that are analytic 
in the unit disc with square-integrable radial limits, 
and define
\begin{equation}
\cK \triangleeq \cH_2 \ominus b(z) \cH_2,
\end{equation}
where 
$b(z) = \det(zI-A^*) / \det(I-zA)$
is a Blaschke product with poles at the eigenvalues of $A$.
In fact, $b(z)$ is the inner, or Douglas-Shapiro-Shields, 
factor of $G(z)$.
Then $\cK$ contains all functions in $\cH_2$ which 
are orthogonal to those that 
vanish on the spectrum of $A^*$, and it is 
usually called the coinvariant subspace.
By \cite[Prop. 4]{TGstructure} the elements of $G(z)$
form a basis for $\cK$, so any $f\in\cK$ 
can be written $f(z)=CG(z)$ for some vector $C$, and then 
\begin{equation}\label{ss2tf}
 f(z) = \frac{\det(I-z(A-BC))-\det(I-zA)}{\det(I-zA)} \in \cK.
\end{equation}

We also need to take inner products between elements in $\cK$ 
and $\cH_2$, and then the following formulas are useful.

\begin{lemma}
If $f(z)\in \cH_2$ then 
\begin{eqnarray}\label{Geval1}
\ip{f}{G}
&= &B^*f(A^*)
\end{eqnarray} 
and
\begin{eqnarray}\label{Geval2}
\ip{G}{f} 
&= & \bar f(A) B.
\end{eqnarray} 
Furthermore, 
\begin{eqnarray}\label{Geval3}
\ip{G}{fG} 
&= & \bar f(A) \cG.
\end{eqnarray} 
\end{lemma}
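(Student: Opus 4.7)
The plan is to establish each identity by viewing the inner products on the unit circle as contour integrals and appealing to the holomorphic functional calculus, together with a Lyapunov-type uniqueness argument for the third formula. I would begin with (\ref{Geval1}). Writing $d\theta = dz/(iz)$ on $|z|=1$ and using $G^\star(z) = B^*(I - z^{-1}A^*)^{-1} = B^* z(zI - A^*)^{-1}$, the factor $z$ cancels against $1/z$ in the measure, giving
\[
\ip{f}{G} \;=\; \frac{1}{2\pi i}\oint_{|z|=1} f(z)\, B^* (zI - A^*)^{-1}\,dz \;=\; B^* f(A^*),
\]
because $f$ is analytic on the open unit disk, $\mathrm{spec}(A^*)$ lies strictly inside, and the matrix-valued Cauchy integral formula identifies the right-hand integral as $f(A^*)$.

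Formula (\ref{Geval2}) falls out by conjugate-transposition of (\ref{Geval1}): since $\ip{G}{f} = \ip{f}{G}^*$, and since for a scalar analytic $f$ with Taylor coefficients $\{f_k\}$ one has $f(A^*)^* = \sum \bar f_k A^k =: \bar f(A)$ by termwise conjugation, we read off $\ip{G}{f} = (B^* f(A^*))^* = \bar f(A)\,B$.

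For (\ref{Geval3}), set $M \triangleeq \ip{G}{fG}$ and I would derive a Lyapunov-type equation. Using the sesquilinear identity $\ip{AX}{AY} = A\ip{X}{Y}A^*$ together with (\ref{AG}), and the fact that the scalar $f$ commutes with $A$, one has $AMA^* = \ip{AG}{f\, AG} = \ip{z^{-1}(G-B)}{f z^{-1}(G-B)}$. The factors $z^{-1}$ and their adjoint $z$ cancel on the unit circle, and expanding $(G-B)f^\star(G^\star-B^*)$ yields four contributions: the diagonal term $M$; two cross terms that simplify via (\ref{Geval2}) (giving $\bar f(A)BB^*$) and via the Hardy-space identity $\int fG\,d\theta/(2\pi) = (fG)(0) = f(0)B$ (giving $\overline{f(0)}BB^*$); and the constant term $\overline{f(0)}BB^*$. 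The $\overline{f(0)}$ contributions cancel in pairs, leaving
\[
M - AMA^* \;=\; \bar f(A)\,BB^*.
\]
Since $\bar f(A)$ commutes with $A$ and, by Lemma~\ref{lem:G}, $\cG$ solves $\cG - A\cG A^* = BB^*$, the matrix $\bar f(A)\cG$ satisfies the same Lyapunov equation as $M$. Uniqueness of the stable Lyapunov solution then forces $M = \bar f(A)\cG$.

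The principal obstacle is the careful bookkeeping of signs and boundary terms in the four-way expansion for $AMA^*$; if that becomes unwieldy, an alternative is the direct series route, writing $fG(z) = \sum_{m\ge 0} z^m \sum_{k=0}^m f_k A^{m-k}B$ and then re-indexing the double sum together with the Neumann expression $\cG = \sum_{q\ge 0} A^q BB^*(A^*)^q$ to produce $\bar f(A)\cG$ directly.
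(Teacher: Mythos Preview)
Your argument is correct. The route, however, differs from the paper's. For (\ref{Geval1})--(\ref{Geval2}) the paper works entirely through the power-series expansions $f(z)=\sum_k f_k z^k$ and $G(z)=\sum_k A^kBz^k$, pairing coefficients to obtain $\ip{G}{f}=\sum_k \bar f_k A^k B=\bar f(A)B$ and then conjugating; your contour-integral derivation is precisely the ``generalized Cauchy kernel'' alternative the paper alludes to in the Remark following the proof. For (\ref{Geval3}) the paper again evaluates the inner product by a direct triple-sum re-indexing, whereas you derive the Stein-type identity $M-AMA^*=\bar f(A)BB^*$ and invoke uniqueness---this mirrors the technique the paper uses for Lemmas~\ref{lem:G} and the $\cH$-lemma rather than its own proof of (\ref{Geval3}). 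Your approach buys a cleaner, coordinate-free argument that reuses machinery already in the paper; the paper's series computation is more elementary and sidesteps any regularity question on the unit circle.

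One small point worth tightening: the identity $\tfrac{1}{2\pi i}\oint_{|z|=1} f(z)(zI-A^*)^{-1}\,dz=f(A^*)$ as stated presumes $f$ is analytic on a neighborhood of the closed disc, while $f\in\cH_2$ only guarantees $L^2$ boundary values. A one-line fix is to shrink the contour to $|z|=r<1$ with $r>\rho(A)$, apply the functional calculus there (where $f$ is genuinely analytic), and then let $r\uparrow 1$ using the $L^2$ radial-limit property of $\cH_2$; alternatively, approximate $f$ by polynomials and pass to the limit. The paper's series proof avoids this issue because the orthogonality $\ip{z^k}{z^\ell}=\delta_{k\ell}$ holds in $L^2$ without any continuity assumption.
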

Note that it is important here that $f$ is a scalar function.
\begin{proof}
Since $f\in\cH_2$  and $G\in\cH_2^{n\times 1}$ they have series expansions
\[ f(z) = \sum_{k=0}^\infty f_k z^k, \]
and
\[ G(z) = \sum_{k=0}^\infty G_k z^k
        = \sum_{k=0}^\infty A^k B z^k. \]
Then 
\[ \ip{G}{f} = \sum_{k=0}^\infty \sum_{\ell=0}^\infty
\ip{A^kBz^k}{f_\ell z^\ell} 
= \sum_{k=0}^\infty \bar{f}_k A^k B = \bar{f}(A)B, \]
and the formula for $\ip{f}{G}$ follows by considering 
the complex conjugate.

Finally, 
\begin{eqnarray*}
\ip{G}{fG} & =&   \sum_{k=0}^\infty \sum_{\ell=0}^\infty
\sum_{m=0}^\infty
\ip{A^kBz^k}{f_\ell z^\ell G_m z^m }\\
 & =&   \sum_{\ell=0}^\infty \sum_{m=0}^\infty
 A^{\ell+m} B  \bar f_\ell G_m^*  \\
& =&   \sum_{\ell=0}^\infty \bar f_\ell A^\ell
\sum_{m=0}^\infty
 A^{m} B B^* (A^m)^* \\
& = & \bar f(A) \cG, 
\end{eqnarray*}
which concludes the proof.
\end{proof}

\begin{remark}
Alternatively, this could be proven by considering 
generalized Cauchy kernels 
\[ \ip{G}{f} = \left(
\frac{1}{2\pi} \int_{-\pi}^\pi \bar f(e^{-j\theta})
(I-e^{j\theta}A)^{-1} \, d\theta
\right) B, \]
as in \cite{TGselHA}.
\end{remark}

Estimation of the parameters from data can be performed
by applying the input-to-state filter and 
then using standard techniques, see \cite{BGL}
for examples of filter bank data analysis.





\section{The global optimization problem}

We will assume here that the spectral measure in (\ref{Sigma})
is given by
\[ d\mu (\theta) = W(e^{i\theta}) \:\Lambda d\theta\: 
W(e^{i\theta})^\star, \]
where 
\begin{equation}
W(z) = \sum_{k=0}^\infty w_k z^k \in \cH_2,
\end{equation}
i.e. is analytic in the unit disc 
(so the sum converges for all $z$ in the unit disc), 
and in this class of spectral measures we will 
find the one allowing the maximal 
input variance $\Lambda$ meanwhile 
satisfying the following interpolation conditions:
\begin{equation}\label{covconstr}
\ip{GW\Lambda W^\star G^\star}{1}= \Sigma,  
\end{equation}
where the state covariance $\Sigma$ satisfies the condition in 
Theorem~\ref{th:Sigma}, and
\begin{equation} \label{markconstr}
\ip{G}{W}=H, 
\end{equation}
for an arbitrary nonzero state-Markov vector $H$.

The interpolation constraint in (\ref{covconstr})
was considered in, for example,
\cite{TGstatecov}.
The interpolation constraint in (\ref{markconstr})
can be recognized as a special 
case of the Lagrange-Sylvester interpolation
as studied in 
\cite[section 16]{BGR}.
Here, both constraints are enforced simultaneously.

Thus the optimization problem considered is:
\[ (\mathfrak S) \quad  \begin{block}{rl}
{\displaystyle  \max_{ 
 \begin{array}{c} W\in\cH_2\\
                        \Lambda \in \Reella^+\end{array} } }&
\Lambda, \\[2ex]
\mbox{s.t.} & \left\{
\begin{array}{lll}
\ip{GW\Lambda W^\star G^\star}{1}= \Sigma,  \\
\ip{G}{W}=H. 
\end{array}\right. 
\end{block} \]

Let $\Xi$ be an $(n \times n)$ Hermitian  matrix 
and $\zeta$ be an $(1\times n)$ vector consisting of 
Lagrange multipliers, 
the Lagrangian is then
\begin{eqnarray*}
\cL(W,\Lambda) & \hspace{-0.4cm} \triangleeq  \hspace{-0.4cm} & 
\Lambda  +  
\trace\left\{ 
(\Sigma-\ip{GW\Lambda W^\star G^\star}{1}) \Xi
\right\} 
\\
&& + \zeta (\ip{G}{W}- H).
\end{eqnarray*}

We can rewrite it in the following form
\begin{eqnarray*}
\cL(W,\Lambda) &  \hspace{-0.4cm} =  \hspace{-0.4cm} 
& \Lambda  +  
\trace\left\{ \Sigma \Xi\right\} 
-\ip{W\Lambda W^\star }{G^\star\Xi G}\\
&& -
\zeta H
+\ip{\zeta G}{W}.
\end{eqnarray*}
where
\[ G^\star\Xi G =
B^*(I-\bar z^{-1}A^*)^{-1}\Xi (I- z A)^{-1}B \]
and
\[ \zeta G = \zeta (I- zA)^{-1}B\]
are scalar functions.

Before taking the maximum we write it in the form
\begin{eqnarray*}
\cL(W,\Lambda) &  =  & 
\ip{\zeta G}{W}+\ip{(1-W^\star G^\star\Xi GW)\Lambda}{1}
\\
&& + \trace\{\Sigma \Xi\} - \zeta H
\end{eqnarray*}

\medskip\noindent
{\bf Note:}
$\mbox{\rm Sup} \{ \cL(W,\Lambda) | W\in\cH_2,
                                     \Lambda > 0\} < \infty$ 
only if $G^\star\Xi G$ is in the ``positive cone'', i.e.
it is non-negative for all $z$ on the unit circle,
and 
\begin{equation}\label{WQW2}
\ip{W^\star G^\star \Xi GW}{1} \geq 1.
\end{equation}

Maximizing over $\Lambda$ while assuming (\ref{WQW2}) 
it must hold that
\begin{equation}
\Lambda \ip{1-W^\star G^\star\Xi GW}{1} 
=0,
\end{equation}
and since $\Lambda \neq 0$,
equality must hold in (\ref{WQW2}), i.e.
\begin{equation}\label{WQWstrict2}
\ip{W^\star G^\star\Xi GW}{1} = 1.
\end{equation}

Maximizing over $W$ shows that the following variation has to be 
zero for all $\delta W$ 
\begin{equation}
 \ip{\zeta G-2\Lambda G^\star\Xi GW}{\delta W }  =0.
\end{equation}
Therefore 
\begin{equation}\label{P2} 
\zeta G =2\Lambda  G^\star\Xi G W + V^\star 
\end{equation} 
where $V\in \cH_2$ and $V(0)=0$.
From (\ref{P2}) the poles of $V^\star$ has to be poles 
of $G^\star$. Furthermore, $V\in\cK$ follows by considering the 
partial fraction expansion of (\ref{P2}), so
there must be a 
vector $\nu$ such that
\begin{equation} \label{V} V(z) = \nu G(z) \end{equation}
Then the transfer function $W$ will be given by 
\[ W = \frac{1}{2} (G^\star\Xi G)^{-1} (\zeta G  - V^\star) \Lambda^{-1} \]
\[ \quad =\frac{1}{2\Lambda} (G^\star\Xi G)^{-1} (\zeta G  - G^\star\nu^*) .\]

\begin{lemma}
If $\Xi$ is non-negative 
we can factor $G^\star\Xi G$ as 
\begin{equation}\label{QAA2}
 G^\star \Xi G =  (\xi G)^\star (\xi G),
\end{equation}
where $\xi$ is a row-vector.
\end{lemma}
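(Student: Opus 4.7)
The plan is to reduce the claim to a standard scalar spectral factorization in $\cK$. First I would use the hypothesis $\Xi\succeq 0$ to write $\Xi=C^*C$ for some matrix $C$ (for example, a Cholesky or square-root factor). Substituting this into $G^\star\Xi G$ gives
\[ G^\star\Xi G=(CG)^\star(CG), \]
so the left-hand side is already a sum of squared moduli of scalar functions $C_k G$ on the unit circle, where $C_k$ denotes the $k$-th row of $C$. Each component $C_k G$ lies in $\cK$, since the entries of $G$ span $\cK$.

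Next I would view $h(z):=G^\star(z)\Xi G(z)$ as a single scalar rational function and apply standard spectral factorization. Because $\Xi\succeq 0$, the function $h$ is real and non-negative on the unit circle, and its pole set is contained in the union of the poles of $G$ (the reciprocals of the eigenvalues of $A$, all outside the closed unit disc) and of $G^\star$ (at the eigenvalues of $A^*$, inside the unit disc). A standard scalar spectral factorization then produces a rational function $f\in\cH_2$, with denominator dividing $\det(I-zA)$, such that $h=f^\star f$ on the unit circle.

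Because $f$ is a rational function in $\cH_2$ whose denominator divides $\det(I-zA)$ and whose numerator has degree at most $n-1$, it lies in the coinvariant subspace $\cK$. By Proposition~4 of \cite{TGstructure} (invoked earlier in the excerpt), the entries of $G$ form a basis of $\cK$, so there is a unique row-vector $\xi$ for which $f=\xi G$. Substituting back yields
\[ G^\star\Xi G=f^\star f=(\xi G)^\star(\xi G), \]
which is the desired factorization.

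The main obstacle I anticipate is justifying that the spectral factor $f$ indeed belongs to $\cK$, i.e.\ confirming both the correct pole structure (poles at reciprocals of eigenvalues of $A$, matching $G$) and the correct numerator degree. Once this is in place, the remainder is a direct application of the basis property of $G$ in $\cK$. In particular, no optimization or Lyapunov-type argument enters the proof; the work is purely a rational spectral factorization compatible with the pole structure of $G$.
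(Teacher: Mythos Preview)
Your proposal is correct and follows essentially the same route as the paper: factor $\Xi=C^*C$, recognize $G^\star\Xi G$ as a non-negative scalar function on the circle whose analytic pieces lie in $\cK$, and invoke scalar spectral factorization to obtain a factor in $\cK$, hence of the form $\xi G$. The paper's proof is terser precisely at the point you flag as the main obstacle (why the spectral factor lands in $\cK$), so your version is, if anything, more carefully argued.
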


\begin{proof}
Since $\Xi$ is non-negative and Hermitian it can be factorized as
\[ \Xi = 
\begin{block}{cccc}
\xi_1^* & \xi_2^* & \cdots & \xi_n^*
\end{block}
\begin{block}{cccc}
\xi_1 \\ \xi_2 \\ \vdots \\ \xi_n
\end{block}.
\]
Then 
\begin{equation}\label{GxiG}
 G^\star \Xi G = \sum_{k=1}^n G^\star \xi^*_k \xi_k G 
\end{equation}
and $G^\star \Xi G$ is a sum of elements in $\cK \cup \cK^\star$,
where $k^\star\in\cK^\star$ if $k \in \cK$.
Since all the terms in (\ref{GxiG})
are positive, by spectral factorization 
a vector $\xi$  such that the sum  
is equal to $G^\star \xi^* \xi G$ can be found.
\end{proof}

For $W$ to be analytic outside the unit disc it is necessary that
the factor $(\xi G)^{-\star}$ is cancelled, i.e.
 we need that
\[ W = \frac{1}{2\Lambda}
\frac{\zeta G  - V^\star}{ (\xi G)(\xi G)^{\star}}
     =\frac{\sigma G}{\xi G}.\]
From (\ref{P2}) and (\ref{QAA2}) it follows that
\begin{equation}\label{eq:etaG}
\zeta G =2\Lambda G^\star \xi^* \sigma G+ G^\star\nu^*.
\end{equation}

Using (\ref{WQWstrict2}) and (\ref{eq:etaG})
the dual function is 

\begin{eqnarray*}
\varphi (\xi,\sigma) & \hspace{-0.2cm} =  \hspace{-0.2cm} & 
 \ip{\zeta G }{\frac{\sigma G}{\xi G}}
+\trace\{ \Sigma \xi^* \xi \} - \zeta H
\\[2ex]
&\hspace{-0.2cm} = \hspace{-0.2cm} &
 2\Lambda \sigma \cG \sigma^*
+\xi \Sigma \xi^* - \zeta H
\end{eqnarray*}
since $\ip{V^\star}{W}=0$ and
where $\cG$ was defined in (\ref{GG}).

To determine the last term $\zeta H$, multiply 
(\ref{eq:etaG}) with $G^\star$ and integrate to obtain:
\[
\zeta \ip{GG^\star}{1} =2\Lambda  \ip{G^\star\xi^*\sigma G G^\star}{1}  +
\ip{V^\star G^\star}{1}
\]
the last term is zero and then
\[ \zeta H =  2\Lambda  \sigma \ip{G(G^\star\cG^{-1}H)G^\star}{1}\xi^*
= 2\Lambda \sigma \cH \xi^*,
\]
where
\begin{equation} \label{eq:cH}
 \cH \triangleeq  \ip{G(G^\star\cG^{-1}H)G^\star}{1} .
\end{equation}

\begin{lemma}
The matrix $\cH$ defined by (\ref{eq:cH})
is the unique solution to the Stein equation 
\begin{equation}\label{AHA}
\cH  = A \cH A^* + HB^*.
\end{equation}
\end{lemma}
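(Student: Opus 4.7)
The plan is to imitate the proof of Lemma~\ref{lem:G} and substitute the defining identity $AG(z)=z^{-1}(G(z)-B)$ into the expression for $\cH$. Setting $f(z)\triangleeq G^\star(z)\cG^{-1}H$, which is a \emph{scalar} function, we have
\[
\cH = \ip{G\,f\,G^\star}{1},
\]
and we want to show that sandwiching by $A$ and $A^*$ produces a deficit exactly equal to $HB^*$.

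First I would record the dual identity to $AG=z^{-1}(G-B)$: taking adjoints gives $G^\star A^* = \bar z(G^\star - B^*)$. Then
\[
A\cH A^* = \ip{(AG)\,f\,(G^\star A^*)}{1} = \ip{(G-B)\,f\,(G^\star-B^*)}{1},
\]
since the $z^{-1}$ and $\bar z$ factors combine to $1$ on the unit circle. Expanding the product into four terms,
\[
A\cH A^* = \cH - \ip{Gf}{1}B^* - B\,\ip{fG^\star}{1} + B\,\ip{f}{1}\,B^*.
\]
The next step is to evaluate each cross term, which is where the choice $f=G^\star\cG^{-1}H$ pays off. Using Lemma~\ref{lem:G},
\[
\ip{Gf}{1} = \ip{GG^\star}{1}\cG^{-1}H = \cG\cG^{-1}H = H,
\]
so the term $\ip{Gf}{1}B^* = HB^*$ is exactly the one we want to isolate. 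For the remaining two terms, I would note that $f(z)=B^*(I-\bar z^{-1}A^*)^{-1}\cG^{-1}H$ has only non-positive powers of $z$, as does $G^\star(z)$, so extracting the constant Fourier coefficient gives $\ip{f}{1}=B^*\cG^{-1}H$ and $\ip{fG^\star}{1}=B^*\cG^{-1}H\,B^*$; hence $B\,\ip{fG^\star}{1} = B\,\ip{f}{1}\,B^*$ and those two terms cancel. Rearranging yields $\cH - A\cH A^* = HB^*$, as claimed.

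Finally, uniqueness of the solution to the Stein equation follows immediately from the hypothesis that $A$ is asymptotically stable, exactly as in the last step of Lemma~\ref{lem:G}. The only step requiring any care is the bookkeeping on the two ``parasitic'' cross terms — verifying that they cancel rather than spoil the identity — and this is where the particular normalization $f=G^\star\cG^{-1}H$ is essential: any other choice interpolating $\ip{Gf}{1}=H$ would generically leave a residual.
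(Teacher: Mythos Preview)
Your argument is essentially the paper's own proof: sandwich the defining expression for $\cH$ by $A$ and $A^*$, substitute $AG=z^{-1}(G-B)$ and its adjoint, expand, and verify the cross terms reduce to $-HB^*$; the paper groups the four cross terms as $B\ip{(B^*-G^\star)f}{1}-\ip{GB^*f}{1}$ while you keep them separate, but the evaluation is the same. One small correction: taking adjoints of $AG=z^{-1}(G-B)$ gives $G^\star A^*=z(G^\star-B^*)$, not $\bar z(G^\star-B^*)$, so the cancellation is $z^{-1}\cdot z=1$ identically (on the circle $z^{-1}\bar z=z^{-2}$, which would not give $1$); with that fix everything goes through.
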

\begin{proof}
As in the proof of Lemma~\ref{lem:G}, note that
(\ref{AG}) holds 
and then multiply (\ref{eq:cH})
with $A$ from the left and $A^*$ from the right 
to obtain
\begin{eqnarray}
A\cH A^*  \hspace{-2mm} & = &  
\hspace{-2mm}\ip{AGG^\star A^*(G^\star\cG^{-1}H)}{1} \nonumber \\
& = & 
\hspace{-2mm}\ip{z^{-1} (G - B)z(G^\star-B^*)(G^\star\cG^{-1}H)}{1} 
\nonumber \\
& = & 
\hspace{-2mm}\ip{(GG^\star +BB^*-BG^\star-GB^*)(G^*\cG^{-1}H)}{1}
\nonumber \\
& = & 
\hspace{-2mm}\cH+ B\ip{(B^*-G^\star)(G^\star\cG^{-1}H)}{1}
\nonumber \\
&  & -\ip{GB^*(G^\star\cG^{-1}H)}{1}   \label{eq:AHA}
\end{eqnarray}
The second term in (\ref{eq:AHA}) is zero since
the integrand is analytic outside the unit circle
and $G^\star(\infty)=B^*$.

The third term in (\ref{eq:AHA})
is $HB^*$, which follows by 
considering the action on an arbitrary vector $v$;
\begin{eqnarray*}
\ip{GB^*(G^\star\cG^{-1}H)}{1}v  & = & 
\ip{GB^*v(G^\star\cG^{-1}H)}{1}  \\
& = &
\ip{GG^\star\cG^{-1}H}{1}B^*v  \\
& = & HB^* v.
\end{eqnarray*}
Since $A$ is assumed to be asymptotically stable
the solution to the Stein equation is unique, 
and this completes the proof.
\end{proof}



\begin{remark}
Note that if 
 \begin{equation}
 h(z) := (I-zA)^{-1} H
 \end{equation}
then $\ip{h}{G}$
solves the Stein equation (\ref{AHA}),
and then by uniqueness (compare \cite[Eq. (40)]{TGstructure})
\[ \cH = \ip{h}{G}. \]
\end{remark}

Now, the dual optimality function is
\begin{eqnarray} 
\varphi (\xi,\sigma) \hspace{-0.2cm} & = &\hspace{-0.2cm}
 2\Lambda \sigma \cG \sigma^*
+\xi \Sigma \xi^* - 2\Lambda \sigma \cH \xi^*
\nonumber \\[1ex]
& = &\hspace{-0.2cm}
\begin{block}{cc}
\sigma & \xi 
\end{block}
\begin{block}{cc}
2\Lambda\cG & -2\Lambda \cH\\
0 & \Sigma 
\end{block}
\begin{block}{cc}
\sigma^* \\ \xi^* 
\end{block}
\end{eqnarray}

Maximizing this expression over positive $\Lambda$ 
\[ \varphi (\xi,\sigma) =
 2\Lambda (\sigma \cG \sigma^* - \sigma \cH \xi^*)
+\xi \Sigma \xi^* \]
it is clear that 
$(\sigma \cG \sigma^* - \sigma \cH \xi^*)$ 
has to be zero.
In fact, if it is negative the optimal value of $\Lambda$ would
be zero and we have assumed that it is positive, and 
if it is positive the optimal value of $\Lambda$ would
be infinite. 
Furthermore, the following holds:
\begin{lemma}
Given that $W(z) = \frac{\sigma G(z)}{\xi G(z)}\in \cH_2$, 
the constraint $\ip{G}{W}=H$ implies that 
$\cH \xi^* = \cG \sigma^*$.
\end{lemma}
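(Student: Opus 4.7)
The plan is to take the defining identity $W(\xi G) = \sigma G$ (which follows from $W = \sigma G/(\xi G)$) and take the inner product with $G$ on both sides, reducing everything to explicit matrix expressions via the formulas proven earlier in the excerpt.

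First I would compute the right-hand side directly. Since $\sigma$ is a constant row vector,
\[ \ip{G}{\sigma G} = \ip{G(\sigma G)^\star}{1} = \ip{GG^\star}{1}\sigma^* = \cG\sigma^*, \]
using Lemma~\ref{lem:G}. For the left-hand side I would use $(\xi G)^\star = G^\star \xi^*$ (again because $\xi^*$ is a constant column), pulling $\xi^*$ out of the integral:
\[ \ip{G}{W\xi G} = \ip{GW^\star G^\star \xi^*}{1} = \ip{GW^\star G^\star}{1}\,\xi^*. \]
Now $\ip{GW^\star G^\star}{1}=\ip{G}{WG}$, and since $W\in\cH_2$ is scalar the formula (\ref{Geval3}) applies to give $\ip{G}{WG} = \bar W(A)\cG$. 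Thus equating the two sides yields $\bar W(A)\cG\,\xi^* = \cG\sigma^*$.

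The remaining step, and the only one requiring any real argument, is to identify $\bar W(A)\cG$ with $\cH$. Here I would invoke the Markov constraint $\ip{G}{W}=H$, which by (\ref{Geval2}) reads $\bar W(A)B = H$. Since $\bar W(A)$ is a power series in $A$, it commutes with $A$; multiplying the Lyapunov equation (\ref{AGA}) by $\bar W(A)$ on the left gives
\[ \bar W(A)\cG \;=\; A\bigl(\bar W(A)\cG\bigr)A^* + \bar W(A)B\,B^* \;=\; A\bigl(\bar W(A)\cG\bigr)A^* + HB^*, \]
which is precisely the Stein equation (\ref{AHA}) satisfied by $\cH$. By the uniqueness part of that lemma (asymptotic stability of $A$), $\bar W(A)\cG = \cH$, and substituting back gives $\cH\xi^* = \cG\sigma^*$.

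I expect the only mildly delicate point to be justifying the use of (\ref{Geval3}) for $W$, which requires $W\in\cH_2$—but this is precisely the standing hypothesis. Everything else is algebra combined with the two Sylvester-type equations that were already established.
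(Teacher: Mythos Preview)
Your argument is correct and is, in fact, a cleaner route than the paper's. The paper works directly from the integral definition (\ref{eq:cH}) of $\cH$: it inserts the Markov constraint in the form $G^\star\cG^{-1}H = G^\star\cG^{-1}\ip{G}{W}$ and then applies (\ref{Geval3}) twice, once with $f=\xi G$ and once with $f=1/(\xi G)$, so that the two factors $\overline{\xi G(A)}$ and $(\overline{\xi G(A)})^{-1}$ cancel. Your approach instead identifies $\bar W(A)\cG$ with $\cH$ through the \emph{uniqueness} of the Stein equation (\ref{AHA}), after one application of (\ref{Geval3}) with $f=W$ and one application of (\ref{Geval2}) to read off $\bar W(A)B=H$. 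This buys you two things: you never need $1/(\xi G)\in\cH_2$ (only $W\in\cH_2$, which is the stated hypothesis), and the algebra is shorter. The only point worth flagging is that $\bar W(A)=\sum_k \bar w_k A^k$ converges because the spectrum of $A$ lies strictly inside the unit disc, so commutation with $A$ and the left-multiplication of (\ref{AGA}) are justified; you note this, and it is routine.
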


\begin{proof}
We know that $\ip{G}{W} = H$, and thus

\[ G^\star\cG^{-1}\ip{G}{\frac{\sigma G}{\xi G}} = G^\star\cG^{-1}H,\]
is a scalar function, so
\begin{eqnarray*} 
\cH \xi^* & = & 
\ip{GG^\star (G^\star\cG^{-1}H)}{1}\xi^* \\
& = & 
\ip{ G G^\star\xi^* \left( G^\star\cG^{-1}\ip{G}{\frac{\sigma G}{\xi G}}
\right)}{1} \\
& = & 
\ip{ GG^\star}{\xi G}\cG^{-1}\ip{G}{\frac{\sigma G}{\xi G}} \\
& = & 
\ip{ G}{(\xi G)G}\cG^{-1}\ip{G}{\frac{1}{\xi G}G}\sigma^* \\
& = & \overline{(\xi G(A))} \cG \cG^{-1} 
     (\overline{\xi G(A)})^{-1} \cG \sigma^*\\
& = & \cG \sigma^*
\end{eqnarray*}
where we have used (\ref{Geval3}) twice.
\end{proof}


The complementarity condition
(\ref{WQWstrict2}) can be formulated as 
\[ \ip{\frac{G^\star \sigma^*}{G^\star\xi^*}G^\star\Xi G
\frac{\sigma G}{\xi G}}{1}   =\sigma \cG \sigma^* = 1,\]
and then the dual problem is 
\[ (\mathfrak D) \quad \begin{block}{rl}
{\displaystyle  \min_{\sigma,\xi}}&
\xi \Sigma \xi^*
\\[2ex]
\mbox{s.t.} & \left\{
\begin{array}{lll}
\cG \sigma^* =  \cH \xi^*, \\
\sigma \cG \sigma^* = 1
\end{array}\right. 
\end{block} \]
where $\sigma$ and $\xi$ are related by 
the Markov interpolation conditions.

The variable $\Lambda$ was eliminated above, but it can recovered by
considering the dual of the dual.
Let $\Lambda$ be the 
Lagrange multiplier and
use $\cG \sigma^* = \cH \xi^*$ to eliminate $\sigma$
\begin{eqnarray*} 
L  & = & \xi \Sigma \xi^*
     - \Lambda(\sigma\cG\sigma^*-1) \\
& = &  \xi \Sigma \xi^*
-\Lambda\xi\cH^*\cG^{-1}\cH\xi^*
+\Lambda
\\
& = & 
\xi \left( \Sigma -\Lambda\cH^*\cG^{-1}\cH\right)  \xi^*
+\Lambda
\end{eqnarray*}

which leads us to maximize  $\Lambda$ as 
$\Sigma -\Lambda\cH\cG^{-1}\cH^*$
is non-negative, i.e.

\[ (\mathfrak P^*) \quad \begin{block}{rl}
{\displaystyle  \max_{\Lambda}}&
\Lambda, \\[2ex]
\mbox{s.t.} &
\Sigma -\Lambda\cH^*\cG^{-1}\cH \succeq 0
\end{block} \]
The optimal $\Lambda$ is now given by the largest positive value 
such that  $\Sigma - \Lambda \cH^*\cG^{-1} \cH$ is non-negative 
definite, i.e. the smallest generalized eigenvalue of 
 $(\Sigma,\cH^*\cG^{-1} \cH)$.

\begin{theorem}
Given a state covariance $\Sigma$ satisfying  the condition in 
Theorem~\ref{th:Sigma}, and an arbitrary nonzero state-Markov vector $H$.
Then, an optimizer  $W$ to problem 
$(\mathfrak S)$ is given by $W(z)=(\sigma G(z))/(\xi G(z))$,
where $\xi$ is a nonzero solution to the equation 
\[ \left( \Sigma - \Lambda \cH^*\cG^{-1} \cH \right) \xi^* =0, \]
$\Lambda$ is the smallest generalized eigenvalue of 
 $(\Sigma,\cH^*\cG^{-1} \cH)$,
and finally $\sigma$ is determined by
\[ \sigma^* = \cG^{-1} \cH \xi^* .\]
Furthermore, if the smallest generalized eigenvalue has multiplicity
one the optimizer $W$ is unique.
\end{theorem}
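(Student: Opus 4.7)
The plan is to assemble the Lagrangian and dual derivations already carried out above into a direct verification that $W(z)=\sigma G(z)/\xi G(z)$ is primal feasible and attains the primal optimum, with uniqueness coming from the null-space dimension of $\Sigma-\Lambda\cH^*\cG^{-1}\cH$. The three items to establish are: $W\in\cH_2$, the Markov constraint $\ip{G}{W}=H$, and the covariance constraint $\ip{GW\Lambda W^\star G^\star}{1}=\Sigma$; optimality will then follow from weak duality and uniqueness from simplicity of the eigenvalue.

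The first two items are short. The lemma containing~(\ref{QAA2}) factors $G^\star\Xi G=(\xi G)^\star(\xi G)$, and after the pole cancellation indicated immediately after it we have $W=\sigma G/\xi G\in\cH_2$, because $\sigma G\in\cK\subset\cH_2$ while $1/(\xi G)$ is analytic in the open unit disc. The Markov identity $\ip{G}{W}=H$ is precisely the content of the lemma immediately preceding the theorem, which shows its equivalence to $\cG\sigma^*=\cH\xi^*$, i.e., to the prescription $\sigma^*=\cG^{-1}\cH\xi^*$.

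The covariance identity is where I expect the main work. The dual derivation only enforces it in the rank-one direction given by $\Xi=\xi^*\xi$ via the complementary slackness~(\ref{WQWstrict2}), which yields $\xi(\Sigma-\Sigma_W)\xi^*=0$ for $\Sigma_W:=\ip{GW\Lambda W^\star G^\star}{1}$. To upgrade this to the full matrix equality I would observe that both $\Sigma$ and $\Sigma_W$ satisfy Stein equations of the form $X-AXA^*=BL+L^*B^*$ recalled right after Theorem~\ref{th:Sigma}, and argue that the Markov constraint together with the explicit form of $W$ forces the two right-hand side multipliers $L$ to coincide, so Stein uniqueness gives $\Sigma_W=\Sigma$. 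Once feasibility is in hand, optimality follows from weak duality along the chain $(\mathfrak S)\to(\mathfrak D)\to(\mathfrak P^*)$: any primal feasible $(W,\Lambda)$ obeys $\Lambda\le\lambda_{\min}(\Sigma,\cH^*\cG^{-1}\cH)$, and the stated $\Lambda$ achieves this bound. For uniqueness, multiplicity one of $\Lambda$ makes $\ker(\Sigma-\Lambda\cH^*\cG^{-1}\cH)$ one-dimensional, so $\xi$ is unique up to a nonzero scalar, $\sigma^*=\cG^{-1}\cH\xi^*$ is fixed in the same scalar, and the rational function $\sigma G/\xi G$ is invariant under simultaneous rescaling; hence $W$ is unique.
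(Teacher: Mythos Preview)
The paper's own proof is one line, ``This follows from the derivation above,'' so the entire Lagrangian analysis of Section~3 \emph{is} the proof: form the Lagrangian, read off the stationary conditions to get the rational form $W=\sigma G/\xi G$, pass to the dual $(\mathfrak D)$ and then to $(\mathfrak P^*)$, and read off the generalized-eigenvalue characterization. Your plan is a genuine re-ordering of that logic: rather than derive $W$ from optimality conditions, you take the candidate $W$ handed to you and try to verify feasibility and optimality directly via weak duality. That is a legitimate route, but two of the three verification steps are not supported by the results you cite.

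First, the Markov step. The lemma immediately before the theorem states only the implication $\ip{G}{W}=H\Rightarrow \cH\xi^*=\cG\sigma^*$; it does \emph{not} prove the converse you need. You are using the condition $\sigma^*=\cG^{-1}\cH\xi^*$ as input and must deduce $\ip{G}{W}=H$. The computation can in fact be reversed using~(\ref{Geval3}) (write $\ip{G}{W}=\overline{\xi G}(A)^{-1}\cG\sigma^*$ and identify $\cH\xi^*$ with $\overline{\xi G}(A)H$ via the Stein equation), but that argument has to be supplied; the lemma as written does not give it. Second, the covariance step. You correctly observe that complementary slackness only yields $\xi(\Sigma-\Sigma_W)\xi^*=0$, and you propose to close the gap by matching the right-hand sides of the Stein equations for $\Sigma$ and $\Sigma_W$. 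But you have not shown that the two row vectors $L$ agree; the Markov constraint alone does not obviously force this, and the paper never proves it either (its derivation proceeds formally through the Lagrangian without an explicit feasibility check). Third, the claim that $1/(\xi G)$ is analytic in the open disc is justified in the paper's derivation because $\xi$ arises from a spectral factorization (the lemma containing~(\ref{QAA2})), which makes $\xi G$ outer. In the theorem, however, $\xi$ is defined as an eigenvector of $(\Sigma,\cH^*\cG^{-1}\cH)$, and you should explain why the two descriptions coincide, at least in the multiplicity-one case, before asserting $W\in\cH_2$.

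Your weak-duality and uniqueness arguments are fine once feasibility is in hand.
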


\begin{proof}
This follows from the derivation above
\end{proof}

As in the Markov and Covariance interpolation
problem there is a special choice of the 
Markov parameters that reduce the problem to 
the equivalent of the Pisarenko method \cite{TG-P}.
Namely, chosing $H=B$, then $\cH=\cG$ and maximizing $\Lambda$ under
the constraint
\[ \Sigma-\Lambda\cH^*\cG^{-1}\cH = \Sigma - \Lambda \cG \succeq 0\]
makes the corresponding $W$ an inner function \cite{TG-P}.

\section{Model reduction example}

To illustrate the method proposed here, a model reduction 
application is considered.
The method proposed here is a generalization 
of the q-Markov COVER methods, that were initially
proposed to be used for model reduction \cite{Skelton88}.

The transfer function from input 2 to output 1 of 
a portable CD-player is considered.
This model, of order 120, is provided
by SLICOT \cite{CV}, and has been used 
by, for example, \cite{GA,Vanna}.
The magnitude of the transfer function 
is depicted with a thick solid green line in 
Figure~\ref{frq500}.
There is a wide range of frequencies over 
which there are interesting features of the 
Bode plot.

\begin{figure*}[!thb]
      \centerline{  \resizebox{12cm}{!}{%
      \includegraphics{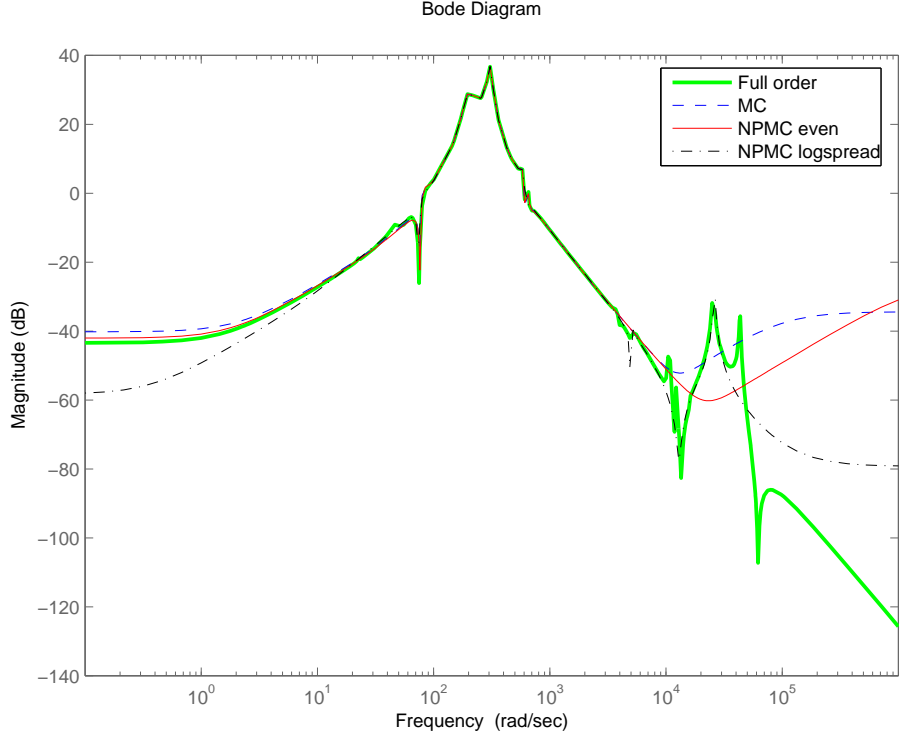}}}
      \caption{Model reduction} 
      \label{frq500}
\end{figure*}

The given transfer function is a  continuous time stable function.
The bilinear map 
\begin{equation}\label{bilin}
 z = \frac{1-sT/2}{1+sT/2}, 
\end{equation}
where $T=1/250$, is used to transform 
the continuous time model into a discrete
time model.

Using three different input-to-state filters, 
reduced order models of degree $13$ 
are designed.
Our aim will not be to find the optimal 
interpolation point locations for this particular model,
 but to illustrate the way this choice effects the 
solutions.

First an input-to-state filter as in (\ref{AB_CM})
was applied, corresponding to the Markov and covariance
interpolation problem described in section \ref{sec:MC}, 
 and the  magnitude plot of the resulting model
is depicted with a blue dashed line 
in Figure~\ref{frq500}.

Then, an input-to-state filter with 14 poles spread 
evenly around a circle with radius $0.95$ was applied.
The magnitude plot, depicted in Figure~\ref{frq500}
with a solid red line,
is similar to the first one, but with a slightly
smaller error for low and high frequencies.

Finally, an  input-to-state filter with 14 poles spread 
unevenly around a circle with radius $0.9$ was applied.
The spread in frequencies were chosen to correpond to 
a logarithmic spread in the frequency interval $10^1$ to $10^5$.
In the discrete domain, the interpolation point 
locations are depicted with black plusses in 
Figure~\ref{frq500ip} together with the 
interpolation points of the two other filters.
This choice of interpolation points is made to 
compensate for the frequency warping caused by the 
bilinear map (\ref{bilin}).
The magnitude plot, depicted in Figure~\ref{frq500}
with a black dashed-dotted line,
shows an improvement of the fit in the frequency
range where the poles were chosen.

\begin{figure}[!htb]
      \centerline{  \resizebox{8cm}{!}{%
      \includegraphics{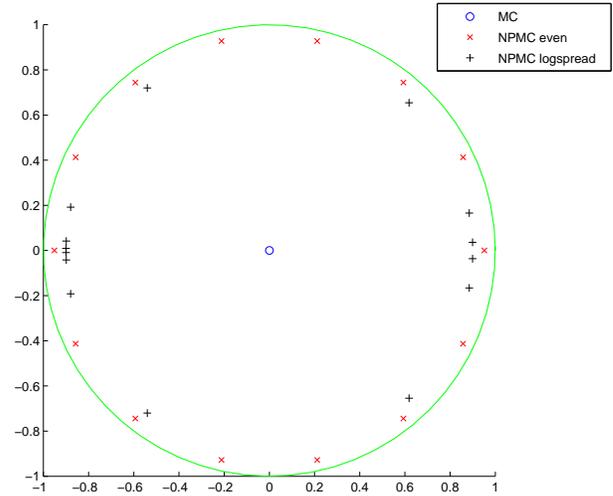}}}
      \caption{Interpolation point locations} 
      \label{frq500ip}
\end{figure}

For comparison, a model of degree 13
is determined using a  standard balanced truncation 
model reduction method and its
magnitude plot is depicted in Figure \ref{bal}.
A good fit for the interval of frequencies 
where the magnitude is large is obtained. 
It is well known that weights can be applied to improve the 
fit for certain frequency regions.
The choice of these weights, as well as the choice of
interpolation points in our approach, should 
be made with the prior knowledge 
and requirements of the low order 
model in mind.

 \begin{figure}[!htb]
      \centerline{  \resizebox{8cm}{!}{%
      \includegraphics{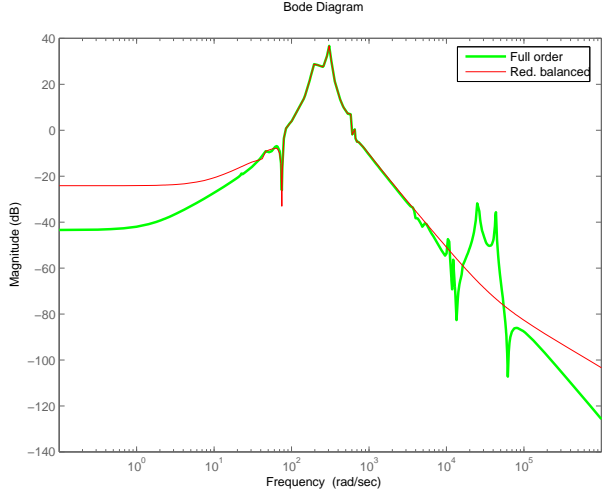}}}
      \caption{Model reduction} 
      \label{bal}
\end{figure}

\section{Useful formulas for the user}

In this section we give simplified formulas for calculating the 
transfer function  and state space representations
of the $W$ parameterized by $\xi$ and $\zeta$.
It is also shown how to determine the 
state-covariances and state-Markov parameters
from these representations of  $W$.
(Note that the problem considered in this paper is the 
inverse of determining the interpolation parameters 
from the model $W$.)
These formulas will be important for applying the method
proposed here.

\subsection{For transfer functions}

Since $W = \sigma G / \xi G$ is a quotient between two functions 
in $\cK$, it follows from (\ref{ss2tf}) that it can be written as 
a quotient of two polynomials
\begin{equation}
\label{Wtf}
W(z) =  \frac{b(z)}{a(z)}
\end{equation}
where 
\begin{equation}
b(z) = \det(I-z(A-B\sigma^*))-\det(I-zA)
\end{equation}
and 
\begin{equation}\label{a}
a(z) = \det(I-z(A-B\xi^*))-\det(I-zA).
\end{equation}
Clearly, $z=0$ is a zero of both $a$ and $b$ so it is cancelled
out, which leaves a $W$ of degree at most $n-1$.

To check if the resulting transfer function $W$ satisfies
the interpolation conditions it is convenient to 
use (\ref{Geval2}) to obtain

\[ \ip{G}{W} = \bar W(A) B = \bar b(A) \bar a(A)^{-1} B.\]

To determine the state-covariance $\cS$ corresponding to a 
particular $W$ we can use the following formula 
from \cite{TGselHA}

\begin{equation}
\cS = \Lambda \ip{GW}{GW} =\frac{1}{2}\Lambda(\Psi \cG + \cG \Psi^\star)
\end{equation}
where $\Psi= \bar f(A)$
and $f$ is the positive real part of $WW^\star$.
The function $f$ satisfies
\[ f+f^\star = WW^\star, \]
and it is clear that 
$f=d/a$, where $a$ is given by (\ref{a})
and 
$d$
solves the 
equation
\begin{equation}
b(z)b^\star(z) = d(z)a^\star(z) + a(z) d^\star(z).
\end{equation}
This equation has a unique solution $d$
such that all the roots of $d$ are outside the unit disc 
provided that all the roots of $a$ also are outside 
the unit disc \cite{Wilson,Goodman}.

\subsection{For state-space realizations}

A state-space realization of degree $n$ corresponding
to (\ref{Wtf}) is given by
\begin{equation}\label{ssdegn}
W(z) = \left[ z\sigma\beta \left( I- A\beta z \right)^{-1} AB +\sigma B
\right](\xi B)^{-1},
\end{equation}
where $\beta=I-B\xi(\xi B)^{-1}$.
From the last section it is known that a state-space  
realization of $W$ of degree $n-1$ exists, and one
is determined in the appendix.

Given a state space representation of $\cW$ as in (\ref{eq:ss}) 
the product $G\cW$ has a realization
\begin{equation}
\left( \begin{array}{c|c}
\hat A & \hat B \\ \hline
\hat C & \hat D
\end{array}
\right)
=
\left( \begin{array}{cc|c}
\cA & 0 & \cB \\
B\cC& A & B\cD \\
\hline
0 & I & 0
\end{array}
\right).
\end{equation}
Then the state-Markov parameter is given by
\begin{equation}
\ip{G}{\cW} = B\cD^*+ A \tilde P \cC^*, 
\end{equation}
where $\tilde P$ solves the 
Stein equation
$\tilde P = A \tilde P\cA^* +  B \cB^*$,
and the state-covariance $\cS$ is given by 
\begin{equation}
\cS =\Lambda \hat C \hat P \hat C^*, 
\end{equation}
where $\hat P$ solves the Lyapunov equation
$\hat P = \hat A\hat P\hat A^* + \hat B \hat B^*$.

\section{Conclusions and future work}

The ideas and results in \cite{PerIEEE-AC} were shown
to carry over to the case where not all interpolation points 
are at zero.
This freedom of chosing the interpolation points can be used
to obtain an improved matching at some frequency regions.
One example was given to illustrate the effect of 
moving the interpolation points.
Input-to-state filters proved to be a convenient tool to 
derive this theory and simple formulas based on solving
Lyapunov equations were obtained.
However, if really high order models are considered
specialized numerical tools have to be developed.

The approach used in \cite{Vanna} applies 
only the interpolation on $\Sigma$ and 
instead of matching $H$ arbitrary 
spectral zeros may be chosen.
This gives the user more freedom 
in designing the model, but at the 
price of having to tune more parameters.

In \cite{PerMTNS2006}, a generalization of the 
Markov and covariance interpolation problem 
with variable input variance to MIMO systems 
was considered.
A similar generalization should be possible here.


\section{Acknowledgement}

The author wishes to thank the anonymous referee 
providing valuable comments.

\appendix

We first derive (\ref{ssdegn}) 
\begin{eqnarray*}
 W(z) & = &\frac{\sigma(I-zA)^{-1}B}{\xi(I-zA)^{-1}B} \\
 &= & \frac{\sigma B+\sigma (z^{-1}I-A)^{-1}AB}{\xi B+\xi(z^{-1}I-A)^{-1}AB}\\
& = & \left( \sigma B+\sigma (z^{-1}I-A)^{-1}AB \right) \times \\
& & \left( I - \xi(z^{-1}I-A\beta)^{-1}AB (\xi B)^{-1} \right)
(\xi  B)^{-1}\\
& = & 
\left( \begin{array}{cc|c}
A\beta & 0 & AB(\xi B)^{-1} \\
-AB\xi & A & AB \\
\hline
-\sigma B\xi & \sigma & \sigma B
\end{array}\right) (\xi B)^{-1}.\\
& = & 
\left( \begin{array}{cc|c}
A\beta & 0 & AB(\xi B)^{-1} \\
0 & A & 0 \\
\hline
\sigma B\xi \beta & \sigma & \sigma B
\end{array}\right) (\xi B)^{-1}\\
& = & 
\left( \begin{array}{c|c}
A\beta  & AB \\
\hline
\sigma \beta & \sigma B
\end{array}\right) (\xi B)^{-1}.
\end{eqnarray*} 
Here we have used the matrix 
\[ T= \begin{block}{cc}
I & 0 \\ -(\xi B)I & I
\end{block}\]
to do a change of basis in the large
system before cancelling the 
unreachable second part of the state vector.

This realization is still non-minimal
since there is both a zero and a pole at infinity.
Note that 
\begin{equation} \label{betaB}
\beta B = B - (\xi B)^{-1} B \xi B = 0.
\end{equation}

Now we use $\Gamma$ in (\ref{Gamma}) to do a 
change of basis.
From (\ref{betaB}) it follows that 
\[ \sigma \beta \Gamma  = 
\sigma \beta A 
\begin{block}{cccc}
0 &B & \cdots& A^{n-2}B 
\end{block},\] 
\[ \Gamma^{-1}A \beta \Gamma  = 
\Gamma^{-1} A\beta A 
\begin{block}{cccc}
0 &B & \cdots& A^{n-2}B 
\end{block},\]
and 
\[ \Gamma^{-1} AB  = 
\begin{block}{ccccc}
0 &1  & 0& \cdots& 0 
\end{block}^T.\]
 
Then the first state in the new basis is 
not observable or reachable so a reduced order
realization is obtained by cancelling it:

\begin{equation}\label{Wreal}
 W(z) = 
\left( \begin{array}{c|c}
\hat \Gamma A\beta A\tilde \Gamma  & e_1 \\
\hline \rule{0cm}{.42cm}
\sigma \beta A \tilde \Gamma & \sigma B
\end{array}\right) (\xi B)^{-1},
\end{equation}
where 
\[ \tilde \Gamma \triangleeq
\begin{block}{cccc}
B & AB & \cdots& A^{n-2}B 
\end{block},\] 
and
\[ \hat\Gamma \triangleeq \left( \Gamma^{-1}\right)_{2:n} = 
\begin{block}{cc} 0 & I \end{block} \Gamma^{-1}, \]
and the subindex $2:n$ denotes rows $2$ to $n$ 
of the matrix.

In particular, if the characteristic polynomial $\chi_A(t)$ is parameterized as
\[ \chi_A(t) = t^n+\chi_1 t^{n-1}+ \cdots +\chi_{n-1} t + \chi_n, \]
the dynamics matrix in (\ref{Wreal}) is 
\begin{eqnarray*}
\tilde A & \triangleeq &
\begin{block}{cc} 0 & I \end{block} \Gamma^{-1}A \beta A \tilde \Gamma \\
 &= & \begin{block}{cc} 0 & I \end{block} 
\Gamma^{-1}A \left(\tilde \Gamma -(\xi B)^{-1}AB\xi \tilde \Gamma\right) \\
& =&  \begin{block}{ccccc} 0 &  -\chi_{n-1}\\ I & -\chi \end{block} -
(\xi B)^{-1} e_1 \xi \tilde \Gamma  \\
& =&  \begin{block}{ccccc} -\gamma &  -\gamma_{n-2}-\chi_{n-1}\\ 
I & -\chi \end{block} 
\end{eqnarray*}
where 
\[ \chi \triangleeq \begin{block}{cccc} \chi_{n-2} & \cdots & \chi_1 \end{block}^T,\]
\[ \gamma \triangleeq \begin{block}{cccc} 
\gamma_0 & \cdots & \gamma_{n-3} \end{block} 
\]
and $\gamma_k=\xi A^{k} B/ (\xi B)$ for $k=0,1,\cdots, n-2$.


\end{document}